\theoremstyle{plain}
\newtheorem{thm}{Theorem}[section]
\numberwithin{equation}{section}
\def \R{\Bbb{R}}
\def \D{\Bbb{D}}
\def \I{\Bbb{I}}
\def\ra{\rightarrow}
\def\pa{\partial}
\def\e{\epsilon}
\def\l{\lambda}
\def\s{\sigma}
\def\O{\Omega}
\def\Si{\Sigma}
\begin{document}

\title[Beltrami system and 1-QC embeddings]
{Beltrami system and 1-quasiconformal embeddings in higher dimensions}
\author{Huanhuan Yang, Tao Cheng and Shanshuang Yang}

\address{Huanhuan Yang: Department Of Mathematics, Shantou University,
	Shantou, Guangdong, 515063, People's Republic of China}
\email{huan2yang@stu.edu.cn}

\medskip
\address{Tao Cheng: Department Of Mathematics,
	East China Normal University,
	Shanghai, 200241,
	People's Republic of China}
\email{tcheng@math.ecnu.edu.cn}

\medskip
\address{Shanshuang Yang: Department Of Mathematics and Computer Sciences,
Emory University,
Atlanta, GA 30322, U.S.A}
\email{syang@mathcs.emory.edu}

\thanks{The second author is partially supported by National Natural Science Foundation of China (No.11371268 and No.11471117) and Science and Technology Commission of Shanghai Municipality  (No.13dz2260400). The third author is partially supported by National Natural Science Foundation of China (No.11471117) and by PERS of Emory University. }

\subjclass[2010]{Primary 30C65, 35A01; Secondary 53A05, 53A30}

\begin{abstract}
In this paper we derive necessary and sufficient conditions for a smooth surface in $\R^{n+1}$ to admit a local
1-quasiconformal parameterization by a domain in $\R^n$ ($n\geq 3$). We then apply these conditions to specific
hypersurfaces such as cylinders, paraboloids and ellipsoids. As a consequence, we show that the classical Liouville theorem
about the rigidity of 1-quasiconformal maps between domains in $\R^n$ with $n\geq 3$ does not extend to embeddings of domains into
a higher dimensional space.
\end{abstract}

\keywords{Quasiconformal map, Beltrami equation, conformally flat, curvature tensor}

\maketitle
\section{Introduction}

The celebrated Liouville Theorem about 1-quasiconformal mappings  states that
if $D$ is a domain in the Euclidean space $\R^n$ ($n\geq 3$), then an embedding
$f: D\ra\R^n$ is 1-QC if and only if it is the restriction to $D$ of a M\"obius
transformation of $\bar\R^n$ (see \cite{Ge1, Ge2} and \cite{Re1, Re2}).
A M\"obius transformation is a finite composition of reflections in spheres or hyperplanes.
This deep result spells out the fundamental difference between conformal
mapping theory in the plane and in higher dimensions. Its sophisticated proof
has a long history and involves tools from analysis, geometry and PDE (see
\cite{IM}, Chapter 5 for more details). This result is also useful in
differential geometry. In particular, it is a major ingredient in the proof
of Mostow's rigidity on compact hyperbolic manifolds of higher dimensions
\cite{Mo}. It is also used in \cite{Ho} to prove a stronger version of Kuiper's theorem on conformally flat manifolds \cite{Ku}.

This paper is largely motivated by the question whether the above mentioned Liouville Theorem for 1-QC mappings can be extended to 1-QC embeddings from
a domain in $\R^n$ into $\R^{n+1}$.  Recall that
an embedding $f: X\ra Y$  in metric spaces $X$ and $Y$
is called {\it quasiconformal}, abbreviated QC, if there is a
constant $K<\infty$ such that
\begin{equation}
\limsup_{r\ra 0}
\frac{\sup\{|f(x)-f(y)|: |x-y|=r\}}{\inf\{|f(x)-f(y)|: |x-y|=r\}}
\leq K
\end{equation}
for all $x\in X$. In this case we also say $f$ is $K$-QC.
Here $|x-y|$ denotes the distance between $x$ and $y$ in a metric space.
This type of distortion condition plays an important role in recent development
of analysis on general metric spaces (see \cite{He}).

In the case $n=2$, it was shown in \cite{Ya} that any $C^{1+\alpha}$-smooth surface $\Si$ in $\R^3$ given by
$\Si=\{(x,y,z) : (x,y)\in D\subset \R^2, z=\phi(x,y)\}$ admits a differentiable $1$-QC parameterization by a plannar domain.
This implies that there are $1$-QC embeddings $f: \R^2 \ra \R^3$ which are not M\"obius transformations. The proof of this result
depends heavily on the theory of $2$-dimensional Beltrami system, which is well established. This paper is devoted to the
study of Beltrami systems and $1$-QC embeddings of higher dimensions.

In Section 2, we establish equivalent conditions for smooth 1-QC embeddings from differential geometric and algebraic points of view. Section 3 is devoted to the connection between the existence of $1$-QC parameterizations of a surface and the existence of
solutions to a Beltrami system of partial differential equations. Section 4 verifies the conformal flatness of a cylinder and a
paraboloid in $\R^{n+1}$ with $n\geq 3$ by computing the Weyl conformal curvature (when $n\geq 4$) or the Cotton curvature (when $n=3$) and applying a classical result of Weyl and Schouten. Finally, in Section 5 we  investigate the conformal flatness of a hypersurface
from a differential geometric point of view by computing the principal curvatures. As examples, we reconfirm that cylinders and
paraboloids are conformally flat and show that an ellipsoid is not conformally flat when $n\geq 4$.
In summary, we conclude that there are local
smooth $1$-QC embeddings $f: D\subset\R^n \ \ra \ \R^{n+1}$ which are not M\"obius for all $n\geq 3$. Thus the
Liouville Theorem does not hold for embeddings of a domain in $\R^n$ into $R^{n+1}$ as well.

\section{Equivalent definitions for smooth 1-QC maps}
In this section, we characterize smooth 1-QC embeddings using languages  from differential geometry and linear algebra.
Let $f: \O \ra \R^m$ be a smooth embedding of a domain $\O\subset\R^n$ into $\R^m$ with $m\geq n\geq 2$. For a fixed point
$a\in \O$, denote the differential of $f$ at $a$ by $\D f(a)$ or $\frac{df(a)}{dx}$. If we write $f$ as $f(x)=(f_1, f_2,\cdots, f_m)$,
then the differential has the following matrix representation:
$$\D f(a)=\left(\frac{\pa f_j(a)}{\pa x_i}\right)_{m\times n},$$
which can also be regarded as a linear transformation from $\R^n$ to $\R^m$. We say that the embedding $f$ is orientation-preserving if
$$\text{det}(\D^tf(a)\D f(a))>0$$
for each $a\in\O$, where $\D^tf(a)$ is the transpose of $\D f(a)$ and $\text{det}(\cdot)$ is the determinant of a matrix.

\begin{thm}
Let $f: \O \ra \R^m$ be an orientation-preserving smooth embedding of a domain $\O\subset\R^n$ into $\R^m$ with $m\geq n\geq 2$. Then the following statements are equivalent:

(a) $f$ is 1-QC in the sense of definition (1.1);

(b) for each $p\in\O$ and all unit vectors $e_1, e_2\in \R^n$,
$$\|\D f(p)e_1\|=\|\D f(p)e_2\|;$$

(c) for  each $p\in\O$  and all $v_1, v_2\in \R^n$,
$$\langle \D f(p)v_1,\D f(p)v_2\rangle = \l(p)^2\langle v_1,v_2\rangle,$$
where
$$\l(p)=[\text{det}(\D^tf\D f)]^{\frac{1}{n}};$$
(d) $f$ is a solution to the equation
\begin{equation}
\D^tf(x)\D f(x)=\l(x)\I,
\end{equation}
where $\l(x)$ is as in (c) and $\I$ is the identity matrix.
\end{thm}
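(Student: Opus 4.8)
The plan is to establish the equivalences in two stages: first the pointwise linear-algebraic equivalences (b) $\Leftrightarrow$ (c) $\Leftrightarrow$ (d), and then the analytic equivalence (a) $\Leftrightarrow$ (b), which is what connects the infinitesimal distortion condition (1.1) to conformality of the differential. Throughout, one argues at a fixed point and then quantifies over all points of $\O$.

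For the linear-algebraic part, fix $p\in\O$, put $A=\D f(p)$, and let $S=A^tA$, a symmetric $n\times n$ matrix that is positive-definite because $\det(\D^tf(p)\D f(p))>0$, so $\D f(p)$ has full rank. Since $\|Ae\|^2=\langle Se,e\rangle$, statement (b) says that the quadratic form $e\mapsto\langle Se,e\rangle$ is constant on the unit sphere, say equal to $\mu(p)>0$; by homogeneity $\langle(S-\mu(p)\I)e,e\rangle=0$ for every $e\in\R^n$, and a symmetric matrix whose associated quadratic form vanishes identically must be zero, whence $S=\mu(p)\I$. Taking determinants identifies the scalar, $\mu(p)=[\det(\D^tf\,\D f)]^{1/n}=\l(p)$. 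Conversely $S=\l(p)\I$ gives (b) immediately. Since $S=\l(p)\I$ is exactly condition (d), and since polarizing the identity $\langle Av_1,Av_2\rangle=\langle Sv_1,v_2\rangle$ passes between $S=\l(p)\I$ and the bilinear formulation in (c), the three statements (b), (c), (d) are equivalent at $p$, hence everywhere on $\O$.

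The heart of the theorem is the equivalence (a) $\Leftrightarrow$ (b). Fix $x\in\O$ and write a nearby point at distance $r$ as $y=x+ru$ with $|u|=1$. Differentiability of $f$ at $x$ gives $f(y)-f(x)=r\,\D f(x)u+o(r)$, where the error term is uniform in the direction $u$ (this is precisely the meaning of $o(|y-x|)$). Hence, denoting by $\s_{\max}$ and $\s_{\min}$ the largest and smallest singular values of $\D f(x)$,
$$\sup_{|x-y|=r}|f(x)-f(y)|=r\,\s_{\max}+o(r),\qquad\inf_{|x-y|=r}|f(x)-f(y)|=r\,\s_{\min}+o(r).$$
Because $\D f(x)$ has full rank, $\s_{\min}>0$, so for all small $r$ the infimum is positive and the quotient appearing in (1.1) converges, as $r\to0$, to $\s_{\max}/\s_{\min}\geq1$. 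Consequently, (1.1) with $K=1$ holds at $x$ if and only if $\s_{\max}=\s_{\min}$, i.e. all singular values of $\D f(x)$ coincide, which is exactly the statement that $\|\D f(x)e\|$ is independent of the unit vector $e$ — condition (b) at $x$. Running this equivalence over all $x\in\O$ completes the argument.

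The main obstacle is the analytic step. One has to justify that the $o(r)$ remainder in the first-order expansion is uniform over the unit sphere, so that it survives being passed through the supremum and the infimum, and one must note that the $\limsup$ in (1.1) is in fact a genuine limit, equal to the singular-value ratio, with the denominator controlled by $\s_{\min}>0$. After that, the equivalences among (b), (c) and (d) are routine manipulations with symmetric matrices and the polarization identity, the only point requiring care being consistency of the normalizing scalar, which is handled once and for all by the determinant computation $\det(\l\I)=\l^n$.
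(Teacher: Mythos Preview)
Your argument is correct and follows essentially the same route as the paper: the paper proves (a) $\Leftrightarrow$ (b) via the first-order expansion $f(x)-f(p)=\D f(p)(x-p)+o(|x-p|)$ and the resulting sandwich on the distortion quotient, exactly as you do (your singular-value phrasing $\sigma_{\max}/\sigma_{\min}$ is just a cleaner way to state the same computation), and for (b) $\Leftrightarrow$ (c) $\Leftrightarrow$ (d) the paper simply declares these ``elementary results from linear algebra'' and refers to a textbook, whereas you spell them out. Your explicit attention to the uniformity of the $o(r)$ term over the unit sphere and the positivity of $\sigma_{\min}$ is well placed; these are precisely the points the paper uses tacitly.
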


{\bf Remarks.} The above equivalent conditions are more or less known to experts, may be in different forms. Due to the lack of a
precise reference, we record them here and give a short proof for the completeness. We also note that, according to the proof, the result is valid for just once differentiable embeddings.

{\bf Proof of Theorem 2.1.}
For the proof of (a) $\Rightarrow$ (b), without loss of generality, we may
fix $p=0$ and assume $f(p)=0$. Given two unit vectors $e_1, e_2\in\R^n$,
let $x=te_1$ and $y=te_2$ for small $t>0$. By the differentiability of $f$ at $0$, it follows that
$$\lim_{t\ra 0}\frac{|f(x)|}{|x|}=|\D f(0)e_1| \ \text{and} \ \lim_{t\ra 0}\frac{|f(y)|}{|y|}=|\D f(0)e_2|.$$
Thus, (b) follows from (1.1) with $K=1$.

For the proof of (b) $\Rightarrow$ (a), we fix $p\in\O$.
Since $f$ is differentiable, for any $x\in\O$ in a neighborhood of $p$
we have
$$f(x)-f(p)=\D f(p)(x-p)+\e(x,p),$$
where $\e(x,p)/|x-p|\ra 0$  as $|x-p|\ra 0$. Therefore, for small $r>0$ and $x, y\in\O$ with $|x-p|=|y-p|=r$,
it follows that
\begin{equation}
\begin{aligned}
\frac{|\D f(p)(y-p)|-|\e(y,p)|}{|\D f(p)(x-p)|+|\e(x,p)|}
&\leq\frac{|f(y)-f(p)|}{|f(x)-f(p)|}\\
&\leq\frac{|\D f(p)(y-p)|+|\e(y,p)|}{|\D f(p)(x-p)|-|\e(x,p)|}.
\end{aligned}\end{equation}
Letting $r\ra 0$, (2.2) together with condition (b) yields that
$$\limsup_{r\ra 0}
\frac{\sup\{|f(x)-f(p)|: |x-p|=r\}}{\inf\{|f(x)-f(p)|: |x-p|=r\}}=1$$
and this shows that $f$ is 1-QC at each point $p$.

\medskip
The equivalence of (b), (c) and (d) are elementary results from linear algebra about non-degenerate linear
transformations $T: \R^n \ra \R^m$. For details, we refer the reader to any standard textbook on this subject.

\section{1-QC parameterization and Beltrami system}

In this section, we first derive a necessary and sufficient condition for a surface in $\R^m$ to admit a $1$-QC parameterization
in terms of the higher dimensional Beltrami system. Then we recall a classical result of Weyl and Schouten on the existence of
local solutions to the Beltrami system.

\begin{thm}
Let $\Si$ be a surface in $R^m$ with a diffeomorphic parameterization $\s: D \ra \Si$ by a domain $D\subset \R^n$, $m>n\geq 2$.
Then $\Si$ admits a differentiable $1$-QC parameterization $f: \O \ra \Si$ by a domain $\O\subset\R^n$
if and only if the Beltrami system
\begin{equation}
\left(\frac{dz}{dx}\right)^t \left(\frac{dz}{dx}\right)=G(x)
\end{equation}
has a differentiable solution $z=h(x)$ in $D$, where $G(x)$ is the $n\times n$ matrix determined by the given parameterization
$\s$ as follows:
\begin{equation}
G(x)=\left(\frac{d\s}{dx}\right)^t\left(\frac{d\s}{dx}\right).
\end{equation}
\end{thm}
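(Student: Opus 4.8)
The plan is to read the equivalence off from Theorem~2.1 together with the chain rule, using the elementary fact that any two diffeomorphic parameterizations of $\Si$ by domains in $\R^n$ differ by a diffeomorphism of the parameter domains. As a preliminary remark, since $\s$ is a diffeomorphism onto the $n$-dimensional surface $\Si$ it is an immersion, so $\D\s(x)$ has rank $n$ and $G(x)$ is symmetric and positive definite at every $x\in D$; in particular $\det G>0$, so $\D h(x)$ is automatically invertible for any solution $h$ of (3.1), which is what will let us pass freely between the maps below and their inverses.

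For the implication ``(3.1) solvable $\Ra$ $\Si$ admits a $1$-QC parameterization'' I would take a differentiable solution $z=h(x)$ of (3.1). By the remark $h$ is a local diffeomorphism, so after shrinking $D$ if necessary (the statement is local in nature, and a genuine parameterization must be injective) I may view $h$ as a diffeomorphism of $D$ onto $\O:=h(D)$ and set $f:=\s\circ h^{-1}:\O\ra\Si$. Then $f$ is a differentiable diffeomorphic parameterization of $\Si$, and the chain rule at $y=h(x)$ gives $\D f(y)=\D\s(x)\,(\D h(x))^{-1}$, whence
\[
\D^t f\,\D f=(\D h)^{-t}\,\D^t\s\,\D\s\,(\D h)^{-1}=(\D h)^{-t}\,G\,(\D h)^{-1}=\I
\]
by (3.1). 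Thus $f$ satisfies (2.1) with $\l\equiv1$, and Theorem~2.1 shows $f$ is a differentiable $1$-QC parameterization of $\Si$.

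For the converse I would start from a differentiable $1$-QC parameterization $f:\O\ra\Si$, set $h:=f^{-1}\circ\s:D\ra\O$, so that $\s=f\circ h$, and differentiate: $\D\s=\D f\,\D h$ at corresponding points, hence $G=\D^t h\,(\D^t f\,\D f)\,\D h$. By Theorem~2.1, $(\D^t f\,\D f)(y)=\l(y)\,\I$ with $\l>0$, and therefore
\[
\D^t h\,\D h=(\l\circ h)^{-1}\,G ,
\]
so $z=h(x)$ satisfies (3.1) up to the positive conformal factor $(\l\circ h)^{-1}$; this is exactly what (3.1) records once it is read, as is standard for higher-dimensional Beltrami systems, modulo a positive scalar field. (Equivalently, a short computation using the identity $\det G=(\l\circ h)^{n}(\det \D h)^{2}$ shows that $h$ is an honest solution of the normalized system $\D^t z\,\D z=(\det \D z)^{2/n}\,(\det G)^{-1/n}\,G$.)

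I expect the one genuinely delicate point to be precisely this conformal-factor bookkeeping: a $1$-QC parameterization need not have constant distortion, so the map $h$ naturally attached to it solves (3.1) only up to a positive scalar field, and one must agree to read (3.1) in that sense (or in the normalized form above) for the statement to be a clean ``if and only if''. Everything else --- invertibility of the differentials, differentiability of the compositions and of their inverses, and the passage to a subdomain on which $h$ is a diffeomorphism --- is routine, and once the factor is understood both implications follow immediately from Theorem~2.1 and the chain rule.
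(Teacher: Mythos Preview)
Your ``if'' direction (a solution of (3.1) yields a 1-QC parameterization) is exactly the paper's: put $f=\s\circ h^{-1}$ and verify $\D^tf\,\D f=\I$ from (3.1) by the chain rule.

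The gap is in the ``only if'' direction. You correctly arrive, as the paper does, at
\[
\D^th\,\D h=(\l\circ h)^{-1}\,G\qquad\text{for }h=f^{-1}\circ\s,
\]
but then you do not eliminate the factor $\l$; instead you propose to reinterpret (3.1) ``modulo a positive scalar field'' or to pass to a normalized system. That is not what the theorem claims: it promises an actual solution of $\D^tz\,\D z=G$, and your $h$ is not one unless $\l\equiv 1$. The paper does not reinterpret the equation; it removes the factor by a further composition. It introduces an auxiliary map $z=z(y)$ on $\O$ solving the Cauchy--Riemann type system
\[
\left(\frac{dz}{dy}\right)^{\!t}\left(\frac{dz}{dy}\right)=\l(y)\,\I,
\]
and then checks by the chain rule that the composite $x\mapsto z(h(x))$ solves (3.1) on the nose. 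This extra composition is the missing idea in your proposal. (One may note that the existence of such a $z$ amounts to the flatness of the metric $\l\,\I$ on $\O$, so the paper is tacitly using a nontrivial fact here; but that is the device by which it closes precisely the gap you flagged rather than redefining the equation.)
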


\begin{proof}
Fix a surface $\Si$ in $\R^m$ with a  parameterization $\s: D \ra \Si$ by a domain $D\subset \R^n$ as in Theorem 3.1.
First assume that $\Si$ admits a differentiable $1$-QC parameterization $f: \O \ra \Si$ by a domain $\O\subset\R^n$.
Consider the composition map
$$y=f^{-1}(\s(x)): D \ra \O.$$
By the chain rule and the inverse function theorem, it follows that
$$\begin{aligned}
\left(\frac{dy}{dx}\right)^t \left(\frac{dy}{dx}\right)
&=\left(\frac{d\s}{dx}\right)^t\left(\frac{dy}{d\s}\right)^t\left(\frac{dy}{d\s}\right)\left(\frac{d\s}{dx}\right)\\
&=\left(\frac{d\s}{dx}\right)^t\left[\left(\frac{df}{dy}\right)^t\left(\frac{df}{dy}\right)\right]^{-1}\left(\frac{d\s}{dx}\right).
\end{aligned}
$$
Since $f: \O \ra \Si$ is $1$-QC, Theorem 2.1 (d) yields that
$$\left(\frac{df}{dy}\right)^t\left(\frac{df}{dy}\right)=\l(y)\I,$$
where
$$\l(y)=\left[det\left(\left(\frac{df}{dy}\right)^t\left(\frac{df}{dy}\right)\right)\right]^{\frac{1}{n}}.$$
Thus it follows that
\begin{equation}
 \left(\frac{dy}{dx}\right)^t \left(\frac{dy}{dx}\right)
=\l(y)^{-1}\left(\frac{d\s}{dx}\right)^t\left(\frac{d\s}{dx}\right)=\l(y)^{-1}G(x).
\end{equation}

To show that the Beltrami system (3.1) has a solution in $D$, let $z=z(y)$ be a differentiable solution to the Cauchy-Riemann
system
$$\left(\frac{dz}{dy}\right)^t\left(\frac{dz}{dy}\right)=\l(y)\I$$
in $\O$. By (3.3) we have
$$\begin{aligned}
\left(\frac{dz}{dx}\right)^t \left(\frac{dz}{dx}\right)
&=\left(\frac{dy}{dx}\right)^t\left(\frac{dz}{dy}\right)^t\left(\frac{dz}{dy}\right)\left(\frac{dy}{dx}\right)\\
&=\l(y) \left(\frac{dy}{dx}\right)^t \left(\frac{dy}{dx}\right)=G(x).
\end{aligned}
$$
This shows that $z=h(x)=z(f^{-1}(\s(x)))$ is a solution to the Beltrami system (3.1) in $D$.

\medskip
Conversely, assume that the Beltrami system (3.1)  has a smooth solution $z=h(x)$ in $D$. Let
$\O=h(D)$ and $f(z)=\s(h^{-1}(z))$. Then, it follows from (3.1) and (3.2) that
$$\begin{aligned}
\left(\frac{df}{dz}\right)^t \left(\frac{df}{dz}\right)
&=\left(\frac{d\s}{dx}\frac{dx}{dz}\right)^t\left(\frac{d\s}{dx}\frac{dx}{dz}\right)\\
&=\left(\frac{dx}{dz}\right)^tG(x) \left(\frac{dx}{dz}\right)=\I.
\end{aligned}
$$
By Theorem 2.1, this shows that
$$\s=f(z)=\s(h^{-1}(z)): \O \ra \Si$$
is a differentiable $1$-QC parameterization of $\Si$ by the domain $\O$ in $\R^n$ as desired.
\end{proof}

\medskip
Theorem 3.1 reveals that finding a differentiable $1$-QC parameterization of a surface $\Si\subset \R^m$  by a domain $\O$ in $\R^n$ boils down to solving the corresponding Beltrami system (3.1).
Unlike the $2$-dimensional case where the Beltrami system is well understood, when dimension $n\geq 3$ the Beltrami system (3.1)
is highly overdetermined and very little is known about the existence of global solutions in a given domain.
However, for the existence of local solutions, we have the following classical results of Weyl and Schouten (see \cite{IM}, Theorem 2.7.1) and \cite{Sc}.

\begin{thm}
Let $G(x)$ be a smooth matrix function, in a domain $D\subset\R^n$, valued in the space of symmetric positive definite
$n\times n$ matrices. Then the Beltrami system
\begin{equation}
\left(\frac{df}{dx}\right)^t \left(\frac{df}{dx}\right)=G(x)
\end{equation}
has local non-constant solutions in $D$ if and only if the following conditions are satisfied.\\
(a) When $n=3$, the Cotton tensor vanishes: $C_{ijk}=0$, $i, j, k=1, 2, 3$.\\
(b) When $n\geq 4$, the Weyl conformal curvature tensor vanishes: $W_{ijkl}=0$, $i, j, k, l=1, 2, \cdots, n$.
\end{thm}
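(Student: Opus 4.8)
The plan is to pass through the intrinsic notion of conformal flatness of the metric and then to reconstruct (a proof of) the classical Weyl--Schouten theorem. Let $g$ denote the Riemannian metric on $D$ whose coefficient matrix in the coordinates $x$ is $G(x)$. First I would record the elementary observation that a non-constant local solution $f$ of the Beltrami system (3.4) is automatically an immersion --- indeed, read in its conformally invariant form, a conformal local diffeomorphism onto its image --- because the positive-definiteness of $G(x)$ forces $\D f(x)$ to have rank $n$ at every point; such an $f$ therefore exhibits the conformal class of $g$ as that of a flat metric. Conversely, a local conformal factor rendering $g$ flat yields, through a flat coordinate chart, a local non-constant solution of (3.4). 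Hence the assertion reduces to the purely Riemannian statement: $g$ is locally conformally flat on $D$ if and only if $C_{ijk}\equiv0$ (when $n=3$), respectively $W_{ijkl}\equiv0$ (when $n\ge4$).

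For necessity I would invoke the standard decomposition of the curvature tensor of $g$ into its Weyl part $W$ and its Schouten part $P_{ij}=\frac1{n-2}\bigl(R_{ij}-\frac{R}{2(n-1)}g_{ij}\bigr)$, together with the Cotton tensor $C_{ijk}=\nabla_kP_{ij}-\nabla_jP_{ik}$. The $(3,1)$ Weyl tensor is a pointwise conformal invariant for every $n\ge3$ and vanishes identically on flat space, so conformal flatness forces $W\equiv0$: this is condition (b) when $n\ge4$. When $n=3$ the Weyl tensor of any metric vanishes and carries no information, but in dimension three the Cotton tensor is conformally invariant and vanishes on flat space, so conformal flatness forces $C\equiv0$: this is condition (a).

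For sufficiency --- the substantive half --- I would look for a local conformal factor $e^{2\varphi}$ with $\tilde g=e^{2\varphi}g$ flat. Since for $n\ge3$ the full curvature of $\tilde g$ is reconstructed from its Weyl tensor and its Schouten tensor $\tilde P$, and since $\tilde W=e^{2\varphi}W$ vanishes by hypothesis (automatically so when $n=3$), flatness of $\tilde g$ is equivalent to $\tilde P\equiv0$. Using the conformal transformation law $\tilde P_{ij}=P_{ij}-\nabla_i\nabla_j\varphi+\nabla_i\varphi\,\nabla_j\varphi-\tfrac12|\nabla\varphi|^2 g_{ij}$, this becomes a first-order system for the $1$-form $\o=d\varphi$,
\[
\nabla_i\o_j=\o_i\o_j-\tfrac12|\o|^2 g_{ij}+P_{ij},\qquad d\varphi=\o,
\]
which I would treat as a Pfaffian (total differential) system on an open subset of $D\times\R^n$ with coordinates $(x,\o)$ and attack with the Frobenius integrability theorem: a local solution through any prescribed initial point exists precisely when the system is involutive. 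Differentiating the right-hand side once more, commuting covariant derivatives by the Ricci identity, and inserting the contracted second Bianchi identity $\nabla^lW_{ijkl}=(n-3)\,C_{ijk}$, one finds that the sole obstruction to involutivity is the Cotton tensor $C_{ijk}$. Thus the hypothesis $C\equiv0$ (when $n=3$), respectively $W\equiv0$ (when $n\ge4$, which forces $C\equiv0$ by the identity just quoted), makes the Frobenius system integrable; a local $\varphi$ exists, $e^{2\varphi}g$ is flat, and the first paragraph converts a flat chart of $e^{2\varphi}g$ into the desired local non-constant solution of (3.4).

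The hard part is the sufficiency direction, and within it the book-keeping computation showing that the Frobenius compatibility conditions for the Schouten system collapse exactly to $C_{ijk}=0$, with nothing further required beyond $W_{ijkl}=0$ once $n\ge4$; keeping track of the curvature terms and the Bianchi identities is where essentially all the labour lies. A minor additional point is regularity: the Frobenius theorem returns a solution as smooth as the data (and a genuine solution in the real-analytic category), which suffices for the smooth statement here. Self-contained treatments along these lines may be found in \cite{IM}, Theorem 2.7.1, and in \cite{Sc}.
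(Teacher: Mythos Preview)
The paper does not supply its own proof of this theorem; it is quoted as a classical result of Weyl and Schouten, with pointers to \cite{IM}, Theorem~2.7.1, and \cite{Sc}. Your sketch is precisely the standard argument found in those references: recast the Beltrami system as the statement that the metric $g$ with coefficient matrix $G(x)$ is locally conformally flat, and then prove the Weyl--Schouten characterization by writing down the Schouten system $\nabla_i\omega_j=\omega_i\omega_j-\tfrac12|\omega|^2g_{ij}+P_{ij}$ for the differential of the conformal factor and checking via Frobenius that its integrability obstruction collapses exactly to the Cotton tensor (using $\nabla^lW_{ijkl}=(n-3)C_{ijk}$ to handle $n\ge4$). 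So there is nothing to compare against beyond the cited sources, and your outline matches them.

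One caution about your first reduction step. As literally displayed in the paper, the system $(Df)^tDf=G$ asks for a local \emph{isometry} of $(D,g)$ into Euclidean space, so a solution would force $g$ to be flat outright, not merely conformally flat; your converse sentence ``a local conformal factor rendering $g$ flat yields \ldots\ a local non-constant solution of (3.4)'' is then false as written, since a flat chart for $e^{2\varphi}g$ satisfies $(Df)^tDf=e^{2\varphi}G$, not $G$. The statement in \cite{IM} carries the Jacobian factor $D^tf\,Df=J(x,f)^{2/n}G$ with $\det G=1$, which is the genuinely conformal equation for which your argument is correct. Your parenthetical ``read in its conformally invariant form'' shows you sensed this, but you should make the normalization explicit rather than inherit the paper's looseness.
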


\medskip
Using the terminology from differential geometry, if a symmetric positive definite matrix $G(x)$ (or a metric) satisfies condition
(a) or (b) in the above Theorem, it
is called {\it conformally flat}. The corresponding surface (or manifold) $\Sigma$ is also called conformally flat.
In order to apply the above Theorems to specific surfaces, we recall the definitions of various involved tensors and curvatures as follows. Given a smooth matrix function $G(x)$ as above, let $g_{ij}$ and $g^{ij}$ denote the elements of $G(x)$ and the elements of the inverse $G^{-1}(x)$, respectively.
With the conventional Einstein's summation notation, the various curvatures and tensors associated with $G(x)$ can be
defined and computed as follows (see \cite{Le}, Chapter 7 and \cite{IM}, Chapter 2).
The Weyl conformal curvature tensor is defined as
\begin{equation}\label{weyleq}
W_{ijkl} = R_{ijkl} + \frac{1}{n-2}(g_{ki}R_{jl}-g_{il}R_{jk}+g_{jl}R_{ik}-g_{jk}R_{il})
+ \frac{1}{(n-1)(n-2)}(g_{il}g_{jk}-g_{ki}g_{jl})R,
\end{equation}
where
$R_{ijkl}$ is the (0,4)-Riemann curvature tensor $R_m: \overset{4}{\bigotimes}\mathcal{T}(\Sigma)\to \mathbb{R}$
$$R_m(X,Y,Z,W) = \langle\nabla_X\nabla_Y Z - \nabla_Y\nabla_X Z - \nabla_{[X,Y]} Z, W\rangle$$
in local coordinates, and $$R_{ij} = g^{\mu\nu}R_{\mu ij\nu} \ \mbox{and} \   \  R = g^{ij}R_{ij}$$
are the Ricci curvature and scalar curvature, respectively.
The Schouten tensor $S_{ij}$ and Cotton tensor $C_{ijk}$ can be defined as
\begin{equation}
S_{ij} = \frac{1}{n-2}\Big(R_{ij}-\frac{R}{2(n-1)}g_{ij}\Big)
\end{equation}
and
\begin{equation}
C_{ijk} = \nabla_jS_{ik}-\nabla_kS_{ij},
\end{equation}
respectively, where the covariant derivative of Schouten tensor can be computed through
\begin{equation}
\nabla_jS_{ik} = \frac{\partial S_{ik}}{\partial x_j} - S_{mk}\Gamma^m_{ij} - S_{im}\Gamma^m_{kj},
\end{equation}
and $\Gamma^m_{ij}$ is the Christoffel symbol of $G$:
$$\Gamma^m_{ij}=\frac{1}{2}g^{km}(\pa_ig_{kj}+\pa_jg_{ki}-\pa_kg_{ij}).$$

\section{Hypersurfaces in $\R^{n+1}$}

In this section, we determine the conformal flatness of certain hypersurfaces in $\R^{n+1}$. This is done by computing
the corresponding tensors of a surface and then applying above theorems given in the previous section.

\subsection{The Riemannian metric on a surface}
Let $\Si$ be a hypersurface in $\R^{n+1}$ with a smooth parameterization
$$\s=\s(x_1,\cdots, x_n) = (x_1,\cdots, x_n, r(x_1, \cdots, x_n)): D \ra \Si $$
by a domain $D\subset \R^n$.
The standard basis $\{\partial_1,\cdots,\partial_n\}$ for the tangent space $T_p\Sigma$ at each point $p$ is given by
$$\partial_i =\frac{\partial\sigma}{\partial x_i}  = (0, \cdots, 0, 1,0,\cdots, 0, r_i) \quad (i = 1, \cdots, n), $$
where $r_i$ denotes the partial derivative $\partial_{x_i}r$.
So the unit normal vector is
$$\displaystyle N=\frac{(-r_1,-r_2, \cdots, -r_n, 1)}{\sqrt{r_1^2+\cdots+r_n^2+1}}.$$
The Riemannian metric on $\Sigma$ induced by the Euclidean metric is then determined by the matrix
\begin{equation}\label{gMatr}
G(x)=\big(g_{ij}\big) = \big(\partial_i \cdot \partial_j\big) = \left( \begin{matrix}
1+r_1^2 & r_1r_2 & \cdots &r_1r_n\\
r_1r_2 & 1+r_2^2 & \cdots & r_2r_n \\
\vdots & \vdots & \ddots & \vdots \\
r_1r_n & r_2r_n & \cdots & 1+r_n^2
\end{matrix} \right),
\end{equation}
which is also called the {\it first fundamental form} of $\Si$.

\subsection{Second fundamental form and shape operator}
In general it is very complicated to explicitly compute the curvature tensors. In the hypersurface case,
the second fundamental form and shape operator provide useful tools in computing the relevant curvature tensors
efficiently.

Following \cite{Le}, Chapter 8, when $\Sigma$ is a Riemannian submanifold of $\widetilde{\Sigma}$, its second fundamental form is defined by the map $II$ from the tangent bundle $\mathcal{T}(\Sigma)\times\mathcal{T}(\Sigma)$ to the normal bundle $\mathcal{N}(\Sigma)$ as:
$$II(X,Y) = (\widetilde{\nabla}_{X}Y)^\perp$$
with $\widetilde{\nabla}$ being the Riemannian connection on  $\widetilde{\Sigma}$. For a hypersurface in particular, since the codimension is one we can then replace $II$ by a scalar quantity using the normal vector $N$ to trivialize $\mathcal{N}(\Sigma)$. That is,
$$II(X,Y) = h(X,Y)N.$$
The term $h$ is classically called the {\it scalar second fundamental form} of $\Sigma$. By the definition,
\begin{equation}\label{hcomp}
\displaystyle h(\partial_i, \partial_j) = \widetilde{\nabla}_{\partial_i}\partial_j \cdot N =\frac{\partial^2\sigma}{\partial x_i\partial x_j} \cdot N = \frac{r_{i,j}}{\sqrt{r_1^2+\cdots+r_n^2+1}},
\end{equation}
where $r_{i,j}=\pa_i\pa_jr$ is the partial derivative.
Raising one index of $h$, one can define the {\it shape operator} $s$ of $\Sigma$:
$$\langle sX,Y\rangle = h(X,Y)\quad \forall X,Y\in\mathcal{T}(\Sigma).$$
At each point $p \in \Sigma$ , the shape operator $s$ is a self-adjoint
linear transformation on $T_p(\Sigma)$, its eigenvalues $\kappa_1,\cdots,\kappa_n$ are called the {\it principal curvatures} of $\Sigma$ at $p$.
The concept of shape operator provides a convenient way for computing principal curvatures and other curvature tensors.

\subsection{Computing Riemann curvature tensor by shape operator} Using shape operator $s$, the Riemann curvature tensor
of a hypersurface $\Si$ can be computed as follows.

First, by the Weingarten equation,
\begin{equation}\label{sdieq}
s\partial_i = -\partial_{x_i}N.
\end{equation}
Denoting $s\partial_i = s_i^j\partial_j$, we have
\begin{equation}\label{sdiE}
s\partial_i = (s_i^1, \cdots, s_i^n, \sum_js_i^jr_j).
\end{equation}
Let $b=r_1^2+\cdots+r_n^2+1$. Combining (\ref{sdieq}) and (\ref{sdiE}), we then obtain
\begin{equation}\label{sij1}
s_i^j = -b^{-\frac{3}{2}}r_j\sum_kr_kr_{k,i}+b^{-\frac{1}{2}}r_{j,i}.
\end{equation}

Next, one can also write the shape operator $s$ in the matrix form:
 $$h(\partial_i, \partial_k)=h_{ik} = \langle\mathit{s}\partial_i, \partial_k\rangle = \langle s_i^j\partial_j, \partial_k\rangle =s_i^jg_{jk}.$$
So we get $s_i^j = g^{jk}h_{ik}$, i.e.
$$(\mathit{s}) =(g^{-1})(h).$$
The inverse $(g^{jk})$ of the Riemannian metric matrix $G(x)$ can be computed directly from $g$ in (\ref{gMatr}) by some elementary but tedious work in liner algebra. Alternatively, if one rewrites $s_i^j$ in (\ref{sij1}) as
$$s_i^j = \sum_k\big(-b^{-\frac{3}{2}}r_jr_k +b^{-\frac{1}{2}}\delta_{jk})r_{k,i},$$
using $s_i^j = g^{jk}h_{ik}$ and the symmetry of $h$, one can easily derive that
\begin{equation}
g^{jk} = \delta_{jk} - b^{-1}r_jr_k,
\end{equation}
where $\delta_{ij}$ is the usual Kronecker symbol.

Finally, by the Gauss equation, one derives
$$
R_m(\partial_i,\partial_j,\partial_k,\partial_l) = h(\partial_i,\partial_l)h(\partial_j,\partial_k)-h(\partial_i,\partial_k)h(\partial_j,\partial_l).
$$
Thus, we arrive at the following explicit and practical formula for computing the Riemann curvature tensor:
\begin{equation}\label{rijkl}
R_{ijkl} = h_{il}h_{jk}-h_{ik}h_{jl} = b^{-1}(r_{i,l}r_{j,k}-r_{i,k}r_{j,l}).
\end{equation}

\subsection{Flatness of a cylinder}
Consider the hypersurface $\Si=\mathbb{R}^{n-1} \times S^1$ in $\R^{n+1}$ given by
$$ \sigma : \mathbb{R}^{n-1}\times[-1,1] \to \mathbb{R}^{n-1} \times S^1,~ \sigma(x_1,\cdots, x_n) = \sigma (x_1,\cdots,x_n, \sqrt{1-{x_n}^2}).$$
Here we have $r = \sqrt{1-{x_n}^2}$.  Hence $r_i=0$ for all $i$ except for $i=n$ and the induced matrix $G(x)$ has the form
$$G(x)=\left( \begin{matrix}
1 & 0 & \cdots & 0\\
0 & 1 & \cdots & 0 \\
\vdots & \vdots & \ddots & \vdots \\
0 & 0 & \cdots & 1+r_n^2
\end{matrix} \right).$$
Therefore $r_{i,j}$ all vanish except for $r_{n,n}$. By expression (\ref{rijkl}), one can easily see that
$$R_{ijkl} = 0, \quad\forall  i,j,k,l = 1, \cdots, n.$$
Thus, it follows from (3.5) and (3.7) that  both the Weyl conformal curvature $W_{ijkl}$ and the Cotton tensor $C_{ijk}$ vanish.
In consequence, Theorem 3.2 yields that for all $n\geqslant 3$ the hypersurface $\mathbb{R}^{n-1} \times S^1$ is conformally flat.

\subsection{Flatness of a paraboloid}
Consider the hypersurface $\sigma :\mathbb{R}^n \to \Sigma \subseteq \mathbb{R}^{n+1}$ with
$$\sigma(x_1,\cdots, x_n) = (x_1,\cdots, x_n, r(x_1, \cdots, x_n)), \quad r = x_1^2 + \cdots + x_n^2.$$
In this case $ r_i = 2x_i, r_{i,j}=2\delta_{ij}$. The associated matrices are given by
$$G(x)=(g_{ij}), \  G^{-1}(x)=(g^{ij})$$
with
$$ g_{ij} = \delta_{ij}+4x_ix_j, \   g^{ij} = \delta_{ij}-4b^{-1}x_ix_j,$$
where
$$b=r_1^2+r_2^2+\cdots+1=4x_1^2+\cdots+4x_n^2+1.$$

To compute the Weyl conformal curvature by definition (3.5), we first note that, by (4.7),  the  Riemannian curvature tensor $R_{ijkl}$ can be computed as
$$ R_{ijkl} = 4b^{-1}(\delta_{il}\delta_{jk}-\delta_{ik}\delta_{jl}).$$
Thus, it follows that
$$
R_{ij} = g^{\mu\nu}R_{\mu ij\nu}
= \sum_{\mu\neq i,\nu\neq j}4b^{-1}(\delta_{\mu\nu}-4b^{-1}x_\mu x_\nu)(\delta_{\mu \nu}\delta_{ij}-\delta_{\mu j}\delta_{i\nu}).
$$
When $i=j$,
$$ \begin{aligned}
R_{ii} &= \sum_{\mu\neq i,\nu\neq i}4b^{-1}(\delta_{\mu\nu}-4b^{-1}x_\mu x_\nu)\delta_{\mu \nu}\\
&= \sum_{\mu\neq i}4b^{-1}(1-4b^{-1}x_\mu^2)
= 4(n-2)b^{-1}+4b^{-2}+4b^{-2}\cdot 4x_i^2.
\end{aligned}$$
When $i\neq j$,
$$\begin{aligned}
R_{ij}& = -\sum_{\mu\neq i,\nu\neq j}4b^{-1}(\delta_{\mu\nu}-4b^{-1}x_\mu x_\nu)\delta_{\mu j}\delta_{i\nu}\\
&= -4b^{-1}(\delta_{ji}-4b^{-1}x_j x_i)
= 4b^{-2}\cdot 4x_ix_j.\end{aligned}
$$
Therefore, for all $i, j $ we have
\begin{equation}
R_{ij} = \big(4(n-2)b^{-1}+4b^{-2}\big)\delta_{ij} + 4b^{-2}\cdot 4x_ix_j.
\end{equation}
Denote by $c=4(n-2)b^{-1}+4b^{-2}$ the coefficient of $\delta_{ij}$ in the above equation.  We then have
\begin{eqnarray*}
R &=& g^{ij}R_{ij} \\
  &=& \big(\delta_{ij}-4b^{-1}x_ix_j\big)\big(c\delta_{ij} + 4b^{-2}\cdot 4x_ix_j \big)\\
  &=& c\delta_{ij} + 4b^{-2}\cdot 4x_ix_j\delta_{ij} -4cb^{-1}x_ix_j\delta_{ij} - 4b^{-3}(4x_ix_j)^2 \\
  &=& nc + 4b^{-2}(b-1)-cb^{-1}(b-1)-4b^{-3}(b-1)^2\\
  &=& 4(n-1)(n-2)b^{-1}+8(n-1)b^{-2}.
\end{eqnarray*}
We note that throughout the computation the following indentities are used:
$$\sum_i4x_i^2=b-1,  \qquad  \sum_{i,j}(4x_ix_j)^2 = (b-1)^2.$$

Next we compute the following two expressions used in the definition of  the Weyl tensor $W_{ijkl}$:
\begin{eqnarray*}
&\ &g_{ki}R_{jl}-g_{il}R_{jk}+g_{jl}R_{ik}-g_{jk}R_{il} \\
&=& (\delta_{ki}+4x_kx_i)\big( c\delta_{jl} + 4b^{-2}\cdot 4x_jx_l\big)-(\delta_{il}+4x_ix_l)\big( c\delta_{jk} + 4b^{-2}\cdot 4x_jx_k\big)\\
& &+(\delta_{jl}+4x_jx_l)\big( c\delta_{ik} + 4b^{-2}\cdot 4x_ix_k\big)-(\delta_{jk}+4x_jx_k)\big( c\delta_{il} + 4b^{-2}\cdot 4x_ix_l\big)\\
&=& 2c(\delta_{ki}\delta_{jl}-\delta_{il}\delta_{jk}) + (c+4b^{-2})(4x_kx_i\delta_{jl}-4x_ix_l\delta_{jk}+4x_jx_l\delta_{ik}-4x_jx_k\delta_{il});
\end{eqnarray*}
and
\begin{eqnarray*}
g_{il}g_{jk}-g_{ki}g_{jl} &=& (\delta_{il}+4x_ix_l)(\delta_{jk}+4x_jx_k) - (\delta_{ki}+4x_kx_i)(\delta_{jl}+4x_jx_l)\\
&=& \delta_{il}\delta_{jk}-\delta_{ki}\delta_{jl}+4x_jx_k\delta_{il}
+4x_ix_l\delta_{jk}-4x_jx_l\delta_{ki}-4x_kx_i\delta_{jl}.
\end{eqnarray*}

Finally, putting all pieces together into expression (\ref{weyleq}) yields that
\begin{eqnarray*}
W_{ijkl} &=& 4b^{-1}(\delta_{il}\delta_{jk}-\delta_{ik}\delta_{jl})+ \frac{2c}{n-2}(\delta_{ki}\delta_{jl}-\delta_{il}\delta_{jk}) \\
&&+\frac{c+4b^{-2}}{n-2}(4x_kx_i\delta_{jl}
-4x_ix_l\delta_{jk}+4x_jx_l\delta_{ik}-4x_jx_k\delta_{il})\\
&&+\frac{R}{(n-1)(n-2)}(\delta_{il}\delta_{jk}-\delta_{ki}\delta_{jl}+4x_jx_k\delta_{il}
+4x_ix_l\delta_{jk}-4x_jx_l\delta_{ki}-4x_kx_i\delta_{jl})\\
&=& \Big(4b^{-1}-\frac{2c}{n-2}+\frac{R}{(n-1)(n-2)}\Big)(\delta_{il}\delta_{jk}-\delta_{ik}\delta_{jl}) \\
&&+\Big(\frac{c+4b^{-2}}{n-2}-\frac{R}{(n-1)(n-2)}\Big)(4x_kx_i\delta_{jl}-4x_ix_l\delta_{jk}+4x_jx_l\delta_{ik}-4x_jx_k\delta_{il}) \\
&= &0.
\end{eqnarray*}
This shows that when $n\geq 4$ the paraboloid is conformally flat.

When $n=3$, we need to evaluate the Cotton tensor in order to verify the conformal flatness. In this example,
\begin{eqnarray*}
S_{ij} &=& \frac{1}{n-2}\Big(R_{ij}-\frac{R}{2(n-1)}g_{ij} \Big) \\
 & =& \frac{1}{n-2}\Big(c\delta_{ij}+4b^{-2}\cdot 4x_ix_j - \frac{R}{2(n-1)}(\delta_{ij}+4x_ix_j) \Big)\\
  & =& \frac{1}{n-2}\Big((c - \frac{R}{2(n-1)})\delta_{ij}+(4b^{-2}-\frac{R}{2(n-1)})4x_ix_j) \Big)\\
  & =& 2b^{-1}(\delta_{ij}-4x_ix_j).
\end{eqnarray*}
The Christoffel symbols can be evaluated as
\begin{eqnarray*}
\Gamma^i_{kl} &=& \frac{1}{2}g^{im}\big(\frac{\partial g_{mk}}{\partial x_l} + \frac{\partial g_{ml}}{\partial x_k} - \frac{\partial g_{kl}}{\partial x_m}\big) \\
 & =& \frac{1}{2}(\delta_{im}-4b^{-1}x_ix_m)\cdot 8x_m\delta_{kl} \\
 &=& 4b^{-1}x_i\delta_{kl}.
\end{eqnarray*}

Therefore, the Cotton tensor $C_{ijk}$ can be computed as follows:
\begin{eqnarray*}
C_{ijk} &=&\nabla_jS_{ik}-\nabla_kS_{ij}\\
 &=&\frac{\partial S_{ik}}{\partial x_j} - \frac{\partial S_{ij}}{\partial x_k} + S_{mj}\Gamma^m_{ik} - S_{mk}\Gamma^m_{ij} \\
&=&-2b^{-2}\cdot 8x_j(\delta_{ik}-4x_ix_k)+2b^{-1}(-4x_i\delta_{jk}-4x_k\delta_{ij}) \\
&&+2b^{-2}\cdot 8x_k(\delta_{ij}-4x_ix_j)+2b^{-1}(-4x_i\delta_{jk}-4x_j\delta_{ik}) \\
&&+2b^{-1}(\delta_{mj}-4x_mx_j)\cdot 4b^{-1}x_m\delta_{ik}
-2b^{-1}(\delta_{mk}-4x_mx_k)\cdot 4b^{-1}x_m\delta_{ij} \\
&=&(-16b^{-2}+8b^{-1})(x_j\delta_{ik}-x_k\delta_{ij}) \\
&& + 8b^{-2}x_j\delta_{ik}-8b^{-2}x_j\delta_{ik}(b-1) - 8b^{-2}x_k\delta_{ij}-8b^{-2}x_k\delta_{ij}(b-1)\\
&=& 0.
\end{eqnarray*}
Since the Cotton tensor vanishes, we conclude that the 3-dimensional paraboloid hypersurface is also conformally flat.

\section{The principal curvature approach to conformal flatness}

In theory, given a surface $\Si\subset\R^{n+1}$, one can compute the matrix $G(x)$  and its Weyl curvature tensor
$W_{ijkl}$ (or the Cotton tensor $C_{ijk}$ when $n=3$)  by using (3.5) (or (3.7)) to determine the conformal flatness of $\Si$.
In reality, however, the computation involved can be cumbersome and close to impossible, even for fairly simple surfaces such as an
ellipsoid. Fortunately, when $n\geq 4$, one can appeal to the following principal curvature approach (see \cite{Sc} and \cite{NM}).

\begin{thm}
When $n\geqslant 4$, a Riemannian manifold $M^n$ is conformally flat if and only if at least n-1 of the principal curvatures coincide at each point.
\end{thm}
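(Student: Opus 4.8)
\medskip
\noindent\textbf{Proof proposal.}
The plan is to use the Gauss equation to turn the vanishing of the Weyl tensor into a system of polynomial identities in the principal curvatures, and then to solve that system. I read $M^n$ as the hypersurface $\Sigma\subset\R^{n+1}$ in hand (the case of a hypersurface in a space form is identical, since the constant-curvature term in the Gauss equation is a Kulkarni--Nomizu product with $g$ and so does not affect $W$). Because $n\geq 4$, ``conformally flat'' means precisely $W_{ijkl}\equiv 0$, so the task is to show that $W$ vanishes at a point $p$ exactly when some value among the principal curvatures $\kappa_1,\dots,\kappa_n$ at $p$ occurs with multiplicity at least $n-1$.

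I would fix $p$ and choose an orthonormal basis of $T_p\Sigma$ made of eigenvectors of the shape operator $s$, so that $g_{ij}=\delta_{ij}$ and $h_{ij}=\kappa_i\delta_{ij}$. By the Gauss formula \eqref{rijkl}, $R_{ijkl}=\kappa_i\kappa_j(\delta_{il}\delta_{jk}-\delta_{ik}\delta_{jl})$, and contracting gives $R_{ij}=\kappa_i(H-\kappa_i)\delta_{ij}$ and $R=H^2-P$, with $H=\sum_m\kappa_m$ and $P=\sum_m\kappa_m^2$. Substituting these diagonal tensors into the definition \eqref{weyleq}, one checks term by term that $W_{ijkl}=0$ unless $\{k,l\}=\{i,j\}$ with $i\neq j$; hence $W\equiv 0$ amounts to $W_{ijji}=0$ for all $i\neq j$. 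A short simplification of \eqref{weyleq} in this frame yields
\[
(n-2)\,W_{ijji}=\kappa_i^2+\kappa_j^2+(n-2)\kappa_i\kappa_j-H(\kappa_i+\kappa_j)+\frac{H^2-P}{n-1}=:Q(\kappa_i,\kappa_j),
\]
so, since $n>2$, the content of $W\equiv 0$ is exactly $Q(\kappa_i,\kappa_j)=0$ for all $i\neq j$, where $Q(u,v)=u^2+v^2+(n-2)uv-H(u+v)+\frac{1}{n-1}(H^2-P)$.

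The heart of the proof is solving this system, and the key device is differencing: for distinct indices $i,j,k$,
\[
Q(\kappa_i,\kappa_j)-Q(\kappa_i,\kappa_k)=(\kappa_j-\kappa_k)\bigl(\kappa_j+\kappa_k+(n-2)\kappa_i-H\bigr),
\]
so $W\equiv 0$ forces $(\kappa_j-\kappa_k)\bigl(\kappa_j+\kappa_k+(n-2)\kappa_i-H\bigr)=0$ for every triple of distinct indices. If the $\kappa_m$ are not all equal, fix $b\neq c$ with $\kappa_b\neq\kappa_c$; the relation with $j=b$, $k=c$ and any $i\notin\{b,c\}$ then forces $\kappa_b+\kappa_c+(n-2)\kappa_i-H=0$, so $\kappa_i=(H-\kappa_b-\kappa_c)/(n-2)=:\lambda$ for all such $i$, i.e.\ at least $n-2$ of the principal curvatures equal $\lambda$. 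Suppose toward a contradiction that fewer than $n-1$ of them coincide; then $\kappa_b\neq\lambda$ and $\kappa_c\neq\lambda$. Since $n\geq 4$ we may pick $m\notin\{b,c\}$ (so $\kappa_m=\lambda$), and the relation with $i=b$, $j=m$, $k=c$ reads $(\lambda-\kappa_c)\bigl(\lambda+\kappa_c+(n-2)\kappa_b-H\bigr)=0$; using $\lambda\neq\kappa_c$ together with $H=(n-2)\lambda+\kappa_b+\kappa_c$, this collapses to $(n-3)(\kappa_b-\lambda)=0$, hence $\kappa_b=\lambda$ because $n\neq 3$ --- a contradiction. This proves the ``only if'' direction. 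For the converse, if $\kappa_1=\cdots=\kappa_{n-1}=\lambda$ and $\kappa_n=\mu$ (possibly $\mu=\lambda$), then a direct substitution gives $Q(\lambda,\lambda)=0$ and $Q(\lambda,\mu)=0$, so $W\equiv 0$.

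The main obstacle is twofold: first, the bookkeeping in the reduction step --- substituting the Gauss-form Riemann and Ricci tensors into \eqref{weyleq}, pinning down which components of $W$ can survive in the eigenframe, and condensing everything into the single quadratic $Q$; and second, the care needed in the endgame, where the hypothesis $n\geq 4$ is invoked twice, to secure a spare index $m\notin\{b,c\}$ and to divide by $n-3$. Both of these fail when $n=3$, which is exactly why the statement is limited to $n\geq 4$ and why the Cotton tensor, rather than the principal curvatures, must be tested when $n=3$.
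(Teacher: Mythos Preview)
Your argument is correct. The paper, however, does not prove Theorem~5.1 at all; it simply quotes the result and refers the reader to \cite{Sc} and \cite{NM}. So there is no ``paper's own proof'' to compare against---you have supplied one where the authors supplied none.

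A couple of remarks. First, you rightly read $M^n$ as a hypersurface in $\R^{n+1}$ (or a space form); the statement as printed in the paper is a bit loose, since ``principal curvature'' only makes sense for a submanifold, and your parenthetical about the Kulkarni--Nomizu correction handles the space-form case cleanly. Second, the assertion that $n\geq 4$ is ``invoked twice'' is a slight overstatement: picking an index $m\notin\{b,c\}$ only needs $n\geq 3$; the genuinely essential use is the division by $n-3$ at the end, which is exactly where the theorem fails for $n=3$. None of this affects the validity of the proof.

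Your route---diagonalize in an eigenframe of the shape operator, use the Gauss equation to reduce $W\equiv 0$ to the quadratic system $Q(\kappa_i,\kappa_j)=0$, and then difference to force a repeated eigenvalue of multiplicity $\geq n-1$---is in fact the classical one going back to Cartan and Schouten and is essentially what one finds in \cite{NM}.
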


We shall use this approach to reconfirm  the flatness of a cylinder or a paraboloid and verify the non-flatness of an ellipsoid.
Recall that the principal curvatures of $\Si$ are the eigenvalues of the shape operator $s$ defined in the previous section.

\subsection{Principal curvatures of cylinders and paraboloids}
Let $\Si$ be a cylinder or paraboloid in $\R^{n+1}$ ($n\geq 4$) defined as above. In the case of a cylinder,
the normal vector is
$$N = (0, \cdots, 0, x_n,  \sqrt{1-{x_n}^2}).$$
Thus we have $\partial _k N = 0$ for $ k =1, \cdots, n-1.$ Therefore,
$$s\partial _k = -\partial_k N = 0 = 0\cdot \partial _k \ \mbox{ for} \  k =1, \cdots, n-1.$$
 This means that $s$ has eigenvalue 0 with multiplicity at least $n-1$. Therefore, at each point, $\mathbb{R}^{n-1} \times S^1$ has at least $n-1$ common principal curvatures 0. It deduces that the cylinder $\Si=\mathbb{R}^{n-1} \times S^1$ is conformally flat as $n\geqslant 4$.

In the case of a paraboloid, the shape operator $s$ has the following matrix representation
$$
(s) = (g^{-1})(h) = 2b^{-\frac{1}{2}} (g)^{-1}.
$$
The metric matrix is
$$
(g) = I_n + 4\left( \begin{matrix}
x_1^2 & x_1x_2 & \cdots &x_1x_n\\
x_1x_2 & x_2^2 & \cdots & x_2x_n \\
\vdots & \vdots & \ddots & \vdots \\
x_1x_n & x_2x_n & \cdots & x_n^2
\end{matrix} \right).
$$
Because the eigenvalues of the matrix $(x_ix_j)_{n\times n}$ are $0, \cdots, 0, x_1^2+\cdots+x_n^2$, the eigenvalues of $(g)$ are therefore $1, \cdots, 1, b.$ Finally, the eigenvalues of $(\mathit{s})$ are
$$
2b^{-\frac{1}{2}}, \cdots, 2b^{-\frac{1}{2}}, 2b^{-\frac{3}{2}},
$$
which are also the principal curvatures of the paraboloid.
Since this  paraboloid has $n-1$ repeating principal curvatures, we conclude that when $n\geqslant 4$ this paraboloid is conformally flat.

\subsection{Non-flatness of an ellipsoid}
Consider the hypersurface $\Sigma \subseteq \mathbb{R}^{n+1}$ defined by the parametrization
$$\sigma(x_1,\cdots, x_n) = (x_1,\cdots, x_n, r(x_1, \cdots, x_n)), \quad r = \sqrt{1-(a_1x_1^2 +\cdots + a_nx_n^2)}.$$ Here we assume that $0 < a_i \neq 1$ and at least two $a_i$'s  are distinct. Under the above established notation, we have
$$\displaystyle r_i = \frac{-a_ix_i}{r}, ~r_{i,j} = \frac{-a_i\delta_{ij} r+a_ix_ir_j}{r^2} =\frac{-a_i\delta_{ij} - r_ir_j}{r}.$$ Therefore,
$$h_{ij} = b^{-\frac{1}{2}}r_{i,j} = -b^{-\frac{1}{2}}r^{-1}(a_i\delta_{ij} - r_ir_j).$$
Let $q = -b^{-\frac{1}{2}}r^{-1}$. The shape operator in matrix form is
then given by
\begin{eqnarray*}
(\mathit{s})& =& (\mathit{g}^{-1})(\mathit{h}) \\
&=&[I_n + (r_ir_j)_{n\times n}]^{-1}\cdot [diag(a_i)+(r_ir_j)_{n\times n}]\cdot q \\
&=&[I_n + (r_ir_j)_{n\times n}]^{-1}\cdot[I_n +(r_ir_j)_{n\times n}+diag(a_i-1)]\cdot q \\
&= &qI_n + [I_n + (r_ir_j)_{n\times n}]^{-1}\cdot[diag(\frac{1}{a_i -1})]^{-1}\cdot q \\
&=& qI_n +[diag(\frac{1}{a_i-1})+(\frac{1}{a_i-1}r_ir_j)_{n\times n}]^{-1} \cdot q.
\end{eqnarray*}
Let
$$B = diag(\frac{1}{a_i-1})+(\frac{1}{a_i-1}r_ir_j)_{n\times n}.$$
To compute the  eigenvalues of $B$, we observe that $B-\lambda I_n$ can be rewritten as $A+uv^t$, where
$$
A =diag(\frac{1}{a_i-1}- \lambda), ~u =[\frac{1}{a_1-1}r_1, \cdots, \frac{1}{a_n-1}r_n]^t, ~v=[r_1, \cdots,r_n]^t.
$$
By the relation $det(A+uv^t)=(1+v^tA^{-1}u)detA$ from linear algebra, we have
$$det(B-\lambda I_n) = (1+v^tA^{-1}u)detA = \prod\limits_{i=1}^n (\frac{1}{a_i-1} - \lambda)\cdot (1+v^tA^{-1}u).$$
Furthermore, since
$$\displaystyle v^tA^{-1}u = \sum\limits_{i=1}^n r_i^2(\frac{1}{a_i-1}-\lambda)^{-1}(a_i-1)^{-1}
= \sum\limits_{i=1}^n\frac{r_i^2}{1-(a_i-1)\lambda},$$
it follows that
$$det(B-\lambda I_n) =\prod\limits_{i=1}^n (\frac{1}{a_i-1} - \lambda)\cdot \big(1+\sum\limits_{i=1}^n\frac{r_i^2}{1-(a_i-1)\lambda}\big).$$
Choose specific values for the parameters $a_i$ as follows:
$$a_1-1 = \cdots =a_{n-1}-1 = 1,  \ a_n -1 = \frac{1}{2}.$$
Then
$$\begin{array}{ccl}
det(B-\lambda I_n) &=& (1-\lambda)^{n-1}(2-\lambda)\big(1+\frac{\sum\limits_{i=1}^{n-1} r_i^2}{1-\lambda} + \frac{2r_n^2}{2-\lambda}\big)\\
&=& (1-\lambda)^{n-2}\big((1-\lambda)(2-\lambda)+\sum\limits_{i=1}^{n-1} r_i^2(2-\lambda)+2r_n^2(1-\lambda)\big).
\end{array}$$
Since $\lambda = 1$ is not a root of $(1-\lambda)(2-\lambda)+\sum\limits_{i=1}^{n-1} r_i^2(2-\lambda)+2r_n^2(1-\lambda)$,  the matrix $B$ can not have $n-1$ repeating eigenvalues. Neither does $(s) = qI_n + qB^{-1}$. That is, the ellipsoid doesn't have $n-1$ repeating principal curvatures. In conclusion, this ellipsoid is not conformally flat when $n\geq 4$.

\subsection{Concluding remarks}
Consider a $1$-quasiconformal embedding $f: D\subset\R^n \ \ra \R^m$ with $m\geq n\geq 2$. The classical Liouville theorem says that if $m=n\geq 3$, then $f$ is a M\"obius transformation. It was shown in \cite{Ya} that this rigidity result does not hold for
$m>n=2$. The above examples of cylinders and paraboloids show that this Liouville type rigidity result does not hold for the
higher dimensional case $m>n\geq 3$ as well. Note that  when $n=2$, as shown in \cite{Ya}, a paraboloid in $\R^3$ admits a global $1$-QC parameterization by a plannar domain. When $n\geq 3$, however, the above example only shows that a paraboloid in
$\R^{n+1}$ locally admits a $1$-QC parameterization. It is still an open question whether there is a global $1$-QC map between
such a paraboloid and a domain in $\R^n$. Furthermore, as illustrated above, an $n$-dimensional ellipsoid (with $n\geq 4$) is not
even locally $1$-QC equivalent to a domain in $\R^n$. This is quite contrary to the fact that a $2$-dimensional ellipsoid is globally
$1$-QC equivalent to the unit $2$-sphere. When $n=3$ the computation of the Cotton tensor for an ellipsoid is prohibitively complicated. So it has not yet been determined in this way whether a three-dimensional ellipsoid is conformally flat, let along the
global conformal equivalence to the unit three-sphere.

\medskip

\newpage

\end{document}